\newtheorem{thm}{Theorem}[section]
\newtheorem{prop}[thm]{Proposition}
\newtheorem{lem}[thm]{Lemma}
\newcommand{\CP}[1]{\ensuremath{\mathbf{CP}^{#1}}}
\newcommand{\C}{\ensuremath{\mathbf{C}}}
\newcommand{\CC}[1]{\ensuremath{\mathcal{C}_{#1}}}
\newcommand{\T}[1]{\ensuremath{\mathcal{T}_{#1}}}
\newcommand{\sym}[1]{\ensuremath{\mathcal{S}_{#1}}}
\newcommand{\alt}[1]{\ensuremath{\mathcal{A}_{#1}}}
\newcommand{\I}{\ensuremath{\mathcal{I}}}
\title{Completely solving the quintic by iteration}
\author{Scott Crass\\
Department of Mathematics and Statistics\\
California State University, Long Beach\\
Long Beach, CA  90840\\
scrass@csulb.edu}
\begin{document}

\maketitle

\begin{abstract}
	
In the late nineteenth century, Felix Klein revived the problem of solving the quintic equation from the moribund state into which Galois had placed it.  Klein's approach was a mix of algebra and geometry built on the structure of the regular icosahedron.  His method's key feature is the connection between the quintic's Galois group and the rotational symmetries of the icosahedron.

Roughly a century after Klein's work, P.\ Doyle and C.\ McMullen developed an algorithm for solving the quintic that also exploited icosahedral symmetry.  Their innovation was to employ a symmetrical dynamical system in one complex variable.  In effect, the dynamical behavior provides for a partial breaking of the polynomial's symmetry and the extraction of two roots following one iterative run of the map.  

The recent discovery of a map whose dynamics breaks \emph{all} of the quintic's symmetry allows for all five roots to emerge from a single run.  After sketching some algebraic and geometric background, the discussion works out an explicit procedure for solving the quintic in a complete sense.

\end{abstract}

\section{Overview}

Solving a polynomial equation calls for a means to overcome the polynomial's symmetry.  In the case of the fifth-degree equation, the general symmetry group is the symmetric group \sym{5}.  In terms of Galois theory, we can reduce the symmetry to that of the alternating group \alt{5} by adjoining the square root of the polynomial's discriminant to the coefficient field.  Our reward for this reduction is that we can realize \alt{5} as the rotational symmetries of the regular icosahedral configuration of the complex projective line \CP{1}---that is, the Riemann sphere.

By exploiting icosahedral structure, Doyle and McMullen constructed a quintic-solving algorithm at the core of which is a map $\phi$ that respects the \alt{5} symmetry.\cite{dm}  The map is strongly critically finite, meaning that it's critical set \CC{\phi}, the twenty face-centers of the icosahedron, is $\phi$-invariant; that is, $\phi(\CC{\phi})=\CC{\phi}$.  In particular, each superattracting critical point has period two.  It follows that almost every point in \CP{1} belongs to the basin of attraction of some two-cycle in \CC{\phi}.  The procedure employs $\phi$'s dynamics in a way that partially breaks the \alt{5} symmetry and, with one iterative run, computes two roots.

Recent work determined all icosahedrally-symmetric maps with \emph{internally periodic} critical sets of size $60$.\cite{soccer,icos31}  Internal periodicity means that the map acts on its critical set as a permutation.  Here, we build a quintic-solving device around the dynamics of one such map $g$ whose critical points have period five.  Since the superattracting set has generic size, the dynamics of $g$ effectively breaks \emph{all} of an equation's \alt{5} symmetry.  Accordingly, the algorithm produces all five roots with a single iterative run.

Computational results are produced by \emph{Mathematica} and basins-of-attraction plots are the product of \emph{Dynamics 2}. \cite{dyn2} 
\section{Icosahedral algebra: invariants, and equivariants}

An account of the algebraic objects that emerge from the icosahedral action on \CP{1} appears in other places (\cite{klein}, \cite{dm}, and \cite{soccer}).  Results relevant to the task at hand appear without discussion.

Denote by \I\ the \alt{5}-isomorphic group of $60$ rotational symmetries of the regular icosahedron as a graph structure on the sphere.  (See Figure~\ref{fig:5tet}.)  Three polynomials generate the ring of \I-invariants:
$$\C[x,y]^\I=\left<F(x,y), H(x,y), T(x,y)\right>$$
where $(x,y)$ are homogeneous coordinates on \CP{1}.  The forms $F$, $H$, and $T$ vanish at the special \I-orbits: the $12$ vertices $v_k$, $20$ face-centers $f_k$, and $30$ edge-midpoints $e_k$ respectively.  For ease of reference, call the members of these sets ``$12$-points," etc.  We can express the generating invariants as products:
\begin{align*}
F=&\ \prod_{k=1}^{12} (x-v_k y)= x y \left(x^{10}-11x^5 y^5-y^{10}\right)\\
H=&\ \prod_{k=1}^{20} (x-f_k y)= x^{20}+228x^{15} y^5+494x^{10} y^{10}-228x^5
y^{15}+y^{20} \\
T=&\ \prod_{k=1}^{30} (x-e_k y)=  x^{30}-522x^{25} y^5-10005x^{20} y^{10}-10005x^{10} y^{20}+522x^5 y^{25}+y^{30}.
\end{align*}

Accordingly, $F$ and $H$  are algebraically independent, while we can arrange for an algebraic combination of the two generators in degree $60$ to vanish (with multiplicity two) at the $30$-points:
$$T^2=H^3-1728\,F^5.$$

We also need the system of invariants for each of the five tetrahedral subgroups $\T{1},\dots,\T{5}$ of \I.  Each \T{k} acts as an alternating group \alt{4} on a set of four $20$-points.  Overall, these disjoint sets occupy the vertices of five regular tetrahedra. Figure~\ref{fig:5tet} shows the icosahedral net and the vertical decomposition into tetrahedral sets.  Taking $k=5$, there are two degree-four relative \T{5} invariants: one, $q_5$, given by the product that involves the tetrahedral vertices and the other, $\Hat{q}_5$, given by the product that uses the tetrahedral face-centers (antipodal to the vertices).  The results are
\begin{align*}
q_5=&\  \frac{1}{4} \bigl(4 x^4+(2+2 i \sqrt{15}) x^3 y+(6-2 i \sqrt{15}) x^2 y^2+(-2-2 i \sqrt{15}) x y^3+4 y^4\bigr)\\
\Hat{q}_5=&\ \frac{1}{2} \bigl(2 x^4+ (1-i \sqrt{15})x^3 y)+(3+i \sqrt{15}) x^2 y^2-
(1-i\sqrt{15}) x y^3+2 y^4\bigr).
\end{align*}
\begin{figure}[ht]
	\centering
	
	\includegraphics[width=4in]{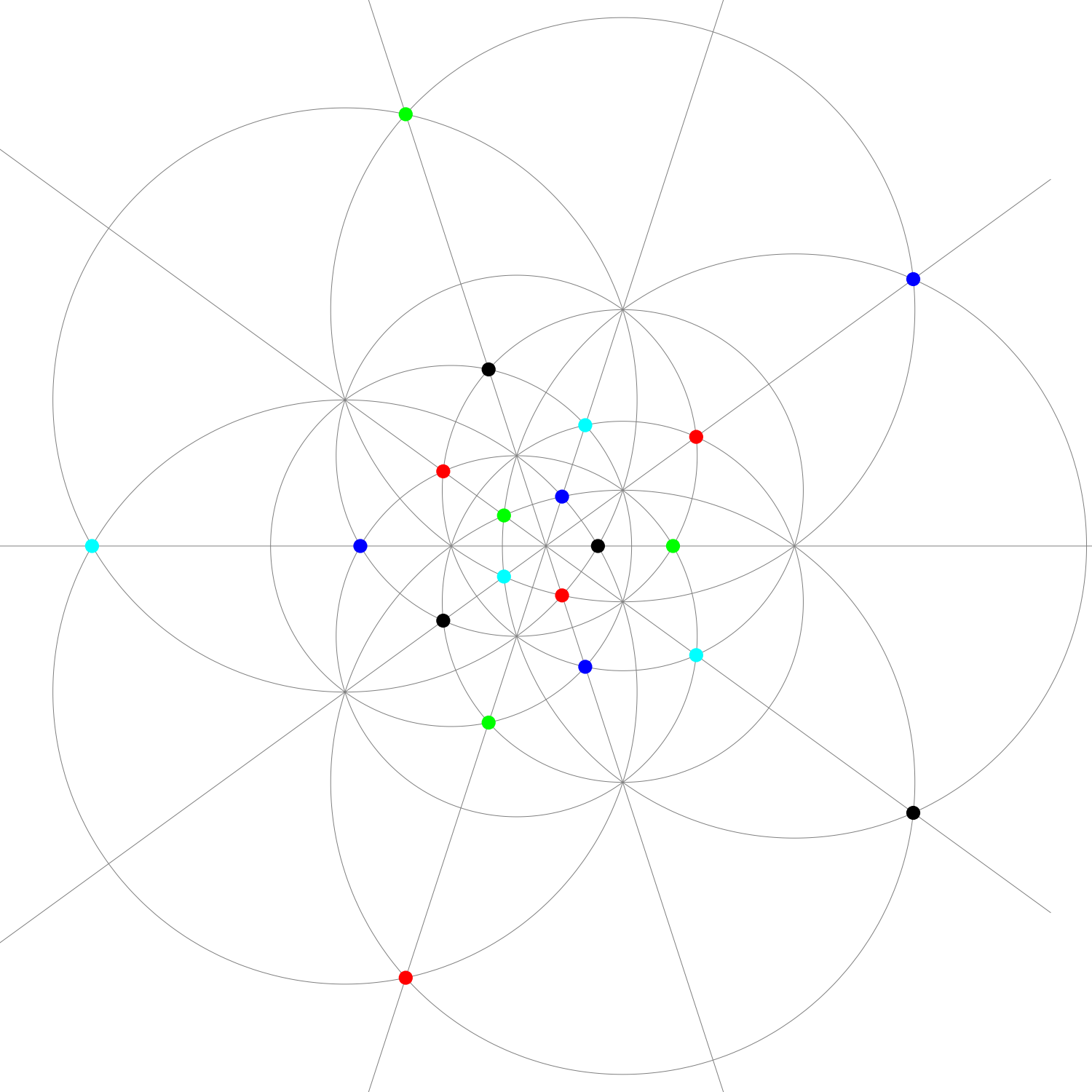}
	\caption{Configuration of five tetrahedral vertices}
	
	\label{fig:5tet}
	
\end{figure}
A \emph{relative invariant} is a form for which a non-trivial multiplicative character appears under the group's action.  For instance, for some $A\in \mathcal{T}_5$,
$$q_5(A(x,y))=\lambda_A\, q_5(x,y)\qquad \lambda_A\neq 1.$$

Using in a product the six-point tetrahedral orbit associated with edges  gives an absolute \T{5} invariant:
$$t_5= x^6-2 x^5 y-5 x^4 y^2-5 x^2 y^4+2 x y^5+y^6.$$
In this case, $t_5(A(x,y))=t_5(x,y)$ for all $A\in \mathcal{T}_5$.  
The product of the degree-four forms yields a degree-eight \T{5} invariant:
$$
u_5=q_5 \widehat{q}_5=x^8+x^7 y+7 x^6 y^2-7 x^5 y^3+7 x^3 y^5+7 x^2 y^6-x y^7+y^8.
$$
Since the eight zeroes of $u_5$ have order-three symmetry, they are also face-centers of the icosahedron.  Hence, $H$ is divisible by $u_5$ and the quotient is a \T{5} invariant of degree $12=20-8$:
\begin{align*}
m_5=\frac{H}{u_5}=&\ x^{12}-x^{11} y-6 x^{10} y^2+20 x^9 y^3+15 x^8
y^4+24 x^7 y^5+11 x^6 y^6-24 x^5 y^7+15 x^4
y^8\\
&\ -20 x^3 y^9-6 x^2 y^{10}+x y^{11}+y^{12}.
\end{align*}
The tetrahedral invariants satisfy a  degree-$24$ relation
$$
m_5^2=\frac{1}{64} \left(95 t_5^2 m_5-40 t_5^4+9 u_5^3\right).
$$
Applying powers of an order-$5$ element $P\in \I$ manufactures the remaining \T{k} invariants:
$$
t_k=t_5\circ P^k  \qquad u_k=u_5\circ P^k \qquad m_k=m_5\circ P^k \qquad  k=1,\dots,4.
$$
In the chosen coordinates, we can take $P(x,y)=(\epsilon^3 x,\epsilon^2 y)$ where $\epsilon=e^{2 \pi i/5}$.

From a generating \I-invariant, we can construct an \I-equivariant (or \I-map) of one less degree using a ``cross" operator $\times$:
\begin{align*}
\phi=&\ \times F=(-\partial_y F,\partial_x F)= \bigl(-x^{11}+66 x^6 y^5+11 x y^{10},11 x^{10} y-66 x^5 y^6-y^{11}\bigr)\\
\eta=&\ \times H=(-\partial_y H,\partial_x H)\\
=&\ 20 \bigl(-57 x^{15} y^4-247 x^{10} y^9+171 x^5 y^{14}-y^{19} ,
x^{19}+171 x^{14} y^5+247 x^9 y^{10}-57 x^4 y^{15}\bigr).
\end{align*}
These maps behave in an elegant manner: $\phi$ twists and wraps a dodecahedral  face $\mathcal{F}$ onto the $11$ faces that comprise the complement of the face antipodal to $\mathcal{F}$ while $\eta$ does the analogous twisting and wrapping for an icosahedral face.  For edges of the respective polyhedra we can take great circle arcs between vertices to obtain sets that are forward invariant under the respective map.  Call this structure a \emph{dynamical polyhedron}.  Moreover, each map expands the internal angle of a face in its dynamical polyhedron onto an external angle of the antipodal face.  The vertices are thereby periodic critical points and their superattracting basins are full-measure subsets of \CP{1}.  The Doyle-McMullen iteration uses $\phi$ whose  attracting set is a special orbit.  Hence, \alt{5} symmetry is partially broken allowing for the extraction of two roots.

To break \alt{5} symmetry fully, we look for a map $g$ whose critical set \CC{g} is a generic $60$-point \I-orbit that is permuted under the action of $g$.  All maps of this sort have degree $31$ and were found in \cite{icos31}.  Excepting two cases, the dynamical polyhedra associated with these special ``$31$-maps" are derived from the icosahedral structure; they consist of twelve pentagons, twenty triangles, and thirty quadrilaterals.  The resulting configuration is called a $B_{62}$.  (It also goes by the awkward name rhombicosidodecahedron.).

\section{A special map}

Take for $g$ a map with period-five critical points so that each five-cycle resides at the consecutive pentagonal vertices on the $B_{62}$.  (In \cite{icos31}, I describe how this map was found as well as the other critically-finite degree-$31$ \I-maps.)   Its analytic form is approximated by
\small
\begin{align*} g=\ & \alpha H\cdot \phi\ +\ \beta F\cdot \eta  \\
\approx\ & \bigl(-19 x (x^{30}-(487.5215055+65.4865970 i) x^{25} y^5 - (10234.856630-436.577313 i)x^{20} y^{10}\\
&-(1781.388882-3383.474177 i)x^{15} y^{15}-
(9016.011606+1878.431334 i)x^{10} y^{20}\\
&+(618.7817389-183.8220266 i) x^5y^{25} + 
(0.3951141318+1.1488876663 i)y^{30},\\
&(-7.50716850-21.82886566 i)(x^{30} y-(22.5591063-530.8337230 i)
x^{25} y^6\\
&-(3875.498123-6514.777510 i) x^{20}
y^{11}-(2156.676586+2292.236531 i) x^{15}y^{16}-\\
&(2399.877301-8083.149873 i) x^{10}y^{21}+
(181.4721179-361.9320839 i) x^5y^{26}\\
&+(0.2676819741-0.7783485674 i)y^{31})\bigr)
\end{align*}
\normalsize
where $(\alpha,\beta)\approx(19,-10.825358425-1.091443283 i)$.

As discussed in \cite{soccer} and \cite{icos31}, $g$'s geometric behavior gives rise to a polyhedral system of ``edges" $\mathcal{E}_g$ that forms a forward invariant set.  This collection of edges fills in the $B_{62}$ structure whose faces consist of twelve pentagons, twenty triangles, and thirty quadrilaterals that realize five-fold, three-fold, and two-fold rotational symmetry respectively.  Figure~\ref{fig:dyn_poly} shows the output of an algorithm worked out in \cite{icos31} that constructs an approximation to the edge-system overlaid on a coloring scheme determined by the map's topological behavior.  
\begin{figure}[ht] \centering
	\includegraphics[width=2.7in]{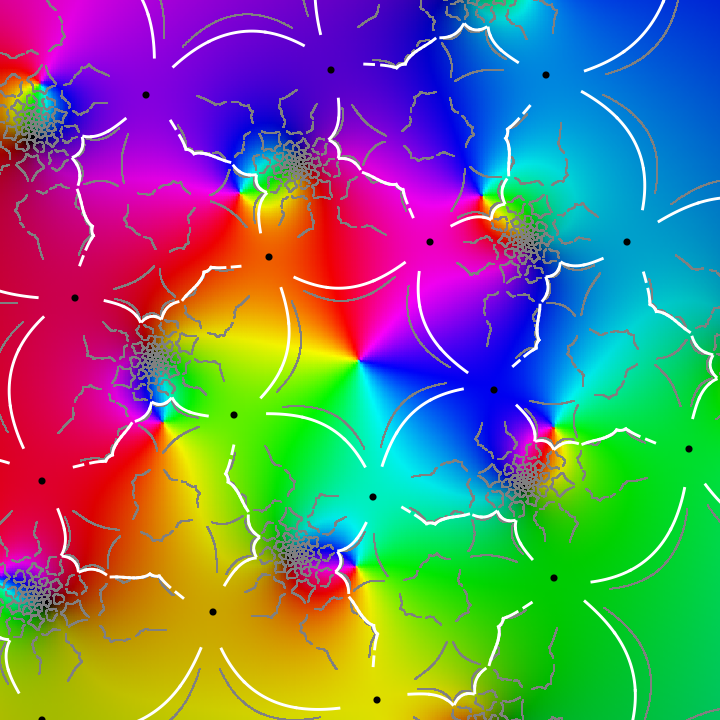}
	\includegraphics[width=2.7in]{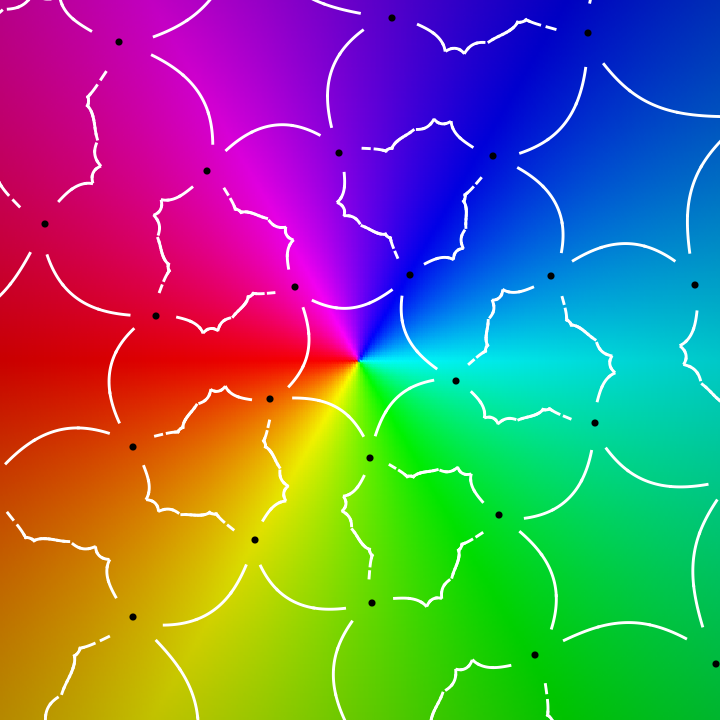}
	\caption{On the right we see an approximation of a $B_{62}$ in \C{}.  The plot exhibits a color-luminosity $(C,L)$ field in which each point $z$ receives a $(C(z),L(z))$ coordinate determined by $(\arg(z),\lvert z \rvert)$.  The plot on the left reveals the combinatorial behavior of $g$ in which a point $z$ is colored as $(C(g(z)),L(g(z))).$  So, the map's behavior is evident by matching color-luminosity values between left and right fields.  The overlaid curves (gray) in the left plot outline the regions---call them ``pre-faces"---that map to the respective types of face (outlined in white) on the right.  The algorithm that generates the edges relies on backward iteration; hence, the appearance of gaps around the critical points.   Specifically, the image of a pentagon covers a pentagon, a triangle covers three pentagons, one triangle, and four quadrilaterals, while a quadrilateral covers ten pentagons, $20$ triangles, and $29$ quadrilaterals.
}.
	\label{fig:dyn_poly}
\end{figure}
In Figure~\ref{fig:basin}, basin-of-attraction plots reveal $g$'s symmetry and global dynamics.
\begin{figure}[ht] \centering
\includegraphics[width=2.7in]{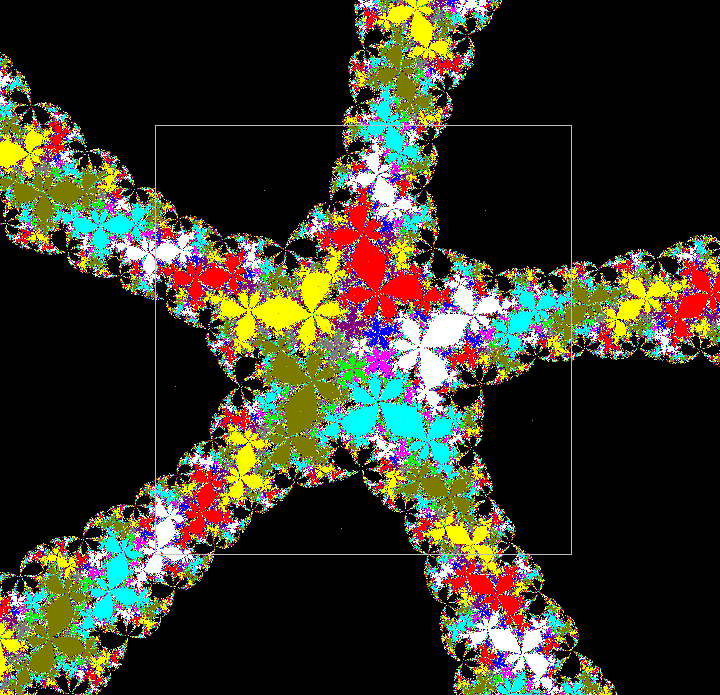}
\includegraphics[width=2.7in]{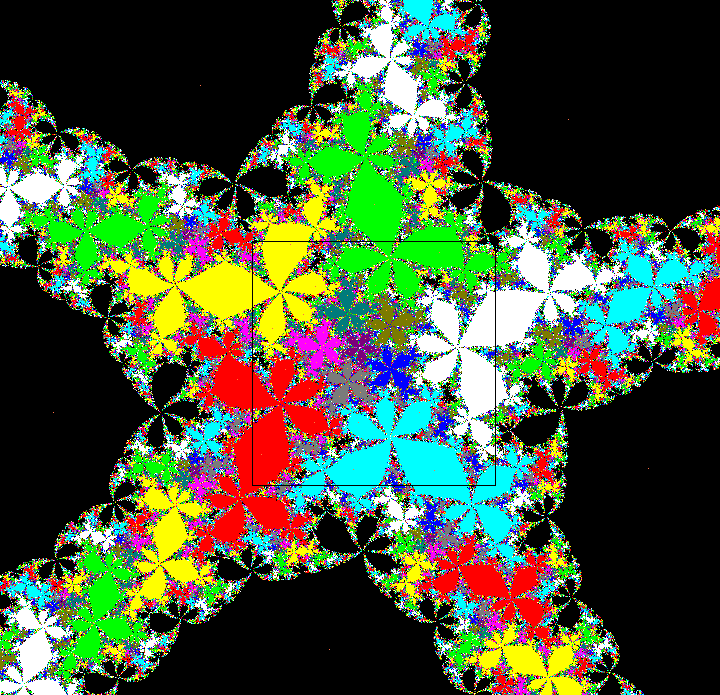}\\[5pt]
\includegraphics[width=2.7in]{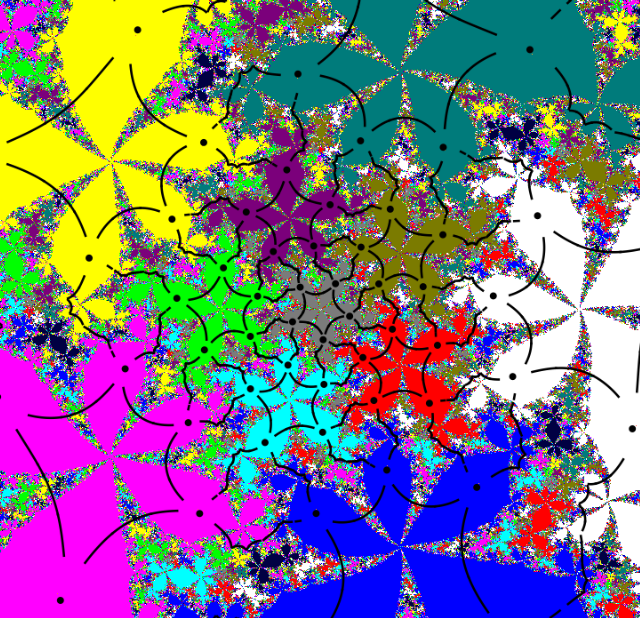}
\includegraphics[width=2.7in]{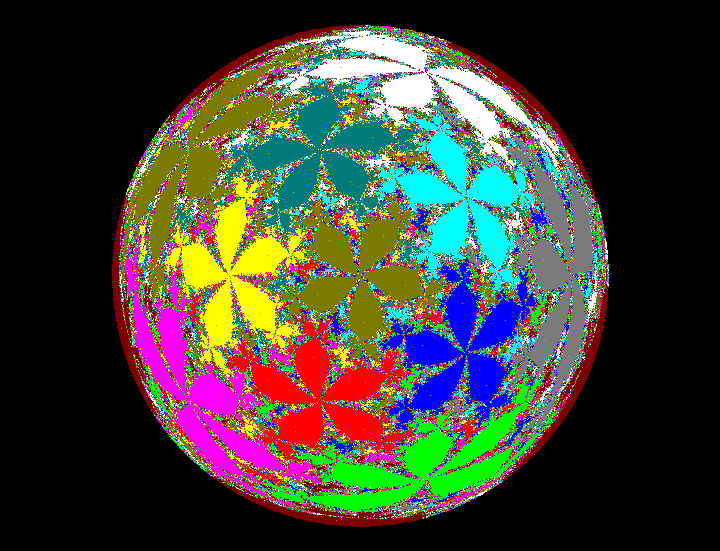}
\caption{Basins of attraction for twelve critical five-cycles.  The top right plot is a magnified view of the region in the square box shown in the top left plot while the bottom left image displays the region bounded by the square in the plot at top right and also displays the edge-system $\mathcal{E}_g$.  The view at bottom right shows the basins under a projection of the plane onto a disk.  Note that the coloring is inconsistent between the plots.}
\label{fig:basin}
\end{figure}

By critical-finiteness, the orbit of almost every $p\in \CP{1}$ tends to a critical five-cycle:
$$
g^k(p)\overset{k\rightarrow \infty}{\longrightarrow} (r_1,r_2,r_3,r_4,r_5)\subset \CC{g}.
$$
By the cycling action of $g$ on the adjacent pentagonal vertices $(r_1,\dots,r_5)$, we can take each $r_k$ to be a vertex of the tetrahedron invariant under $\mathcal{T}_k$---evident in Figure~\ref{fig:5tet}.  This dynamical outcome lies at the core of a quintic-solving procedure and the presence of a period-five attracting set makes for an elegant algorithm. 

\clearpage
\section{Solving the quintic}

\subsection{Resolvent}

First, we create a parametrized family of quintic equations that our dynamical algorithm will solve. Let 
$$\rho_k(x)=\frac{F(x)u_k(x) }{H(x)}$$
where $x=(x_1,x_2)$ are homogeneous coordinates replacing the former $(x,y)$.  Take the degree-zero rational functions $\rho_k$ as five roots of an polynomial:
$$R_x(v)=\prod_{k=1}^5 (v-\rho_k(x))=\sum_{j=0}^5  b_j(x) v^j.$$
By construction, the tetrahedral forms $u_k$---hence, the $\rho_k$---are permuted by \I.  Accordingly, the coefficients $b_j$, being symmetric functions in the $\rho_k$, are \I-invariant and, thereby expressible in terms of $F$ and $H$.  

Note that some of the coefficients vanish due to their degree.  For instance, the coefficient of $v^4$ is
$$b_4=\frac{F \sum_{k=1}^5 u_k}{H}.$$
Since $\sum_{k=1}^5 u_k$ is degree-eight and there are no such \I-invariants, it turns out that $b_4=0$.  As for the surviving coefficients, each $b_k$ is rational of degree-zero, and so, admits expression in the icosahedral parameter $Z=\frac{F^5}{H^3}$.  The result is a one-parameter family of quintic resolvents
$$R_Z(v)=v^5-40 Z v^2-5 Z v-Z.$$
For the sake of completeness, note that L.\ Dickson reduced the general quintic to a one-parameter resolvent.\cite[Ch.\ XIII]{dickson}

In the construction of a quintic-solving algorithm, the key step occurs when a quintic $R_Z$ is connected with a map $g_Z$ each of which is conjugate to the special \I-map $g$.  Finally, we'll build a function---also parametrized by $Z$---that will convert $g_Z$'s dynamical output into the roots of a chosen $R_Z$.  

\subsection{Parametrization}

To begin the parametrization process, consider the family of transformations 
$$x=S_y{w}=H(y) \phi(y) w_1 + F(y) \eta(y) w_2$$
that is linear in $w=(w_1,w_2)$ and degree-$31$ in $y=(y_1,y_2)$.  The coordinate $y$ substitutes identically for $x$ with its associated icosahedral group $\I_y$ and serves as a parameter.  Accordingly, the transformation enjoys an equivariance property:
$$S_{Ay}=AS_y \qquad \text{for all $A\in \I_y$}.$$
Figure~\ref{fig:param_coord} shows each $S_y$ as a coordinate change from the $y$-parametrized $w$-space and icosahedral action $\I_y^w$ to the fixed $x$-space with action $\I^x$.
\begin{figure}[ht]
$$\begin{CD}
\CP{1}_x       @>\I^x>> \CP{1}_x &\quad \text{(reference space)}\\
@A{S_y}AA        @VV{S_y^{-1}}V\\
\CP{1}_w   @>\I_y^w>> \CP{1}_w &\quad \text{(parametrized space)}\\
@A{S}AA        @AA{S}A\\
\CP{1}_y   @>\I_y>> \CP{1}_y  &\quad \text{(parameter space)}
\end{CD}$$
\caption{Parametrizing the icosahedral action}
\label{fig:param_coord}
\end{figure}

With coordinate transformation $S_y$ in hand, we can construct the generating invariants and equivariants under $\I_y^w$.  Taking the degree-$12$ invariant
$$F(x)=F(S_y{w})=\sum_{k=0}^{12} a_k(y) w_1^{12-k}w_2^k,$$
the result is a polynomial whose $w$-degree is $12$ while each $a_k(y)$ has a $y$-degree of $12\cdot 31$.  Moreover, each $a_k(y)$ is invariant under $\I_y$ and thereby expressible as a polynomial $\Hat{a}_k(F(y),H(y))$.  Hence, we get
$$\Hat{F}(F(y),H(y)):=F(S_y{w})=\sum_{k=0}^{12} \Hat{a}_k(F(y),H(y)) w_1^{12-k}w_2^k.$$
Note that, by degree considerations, the degree-$30$ form $T(y)$ cannot appear to an odd power in the invariant expression for $a_k(y)$ whereas $T(y)$ raised to an even power converts to a polynomial in $F(y)$ and $H(y)$.  Dividing by $F(y)^{31}$ ``normalizes" $F(x)$ to a degree-zero rational function in $y$ from which we obtain a $Z$-parametrized function: 
\small
\begin{align*}
F_Z(w)=& \frac{F(x)}{F(y)^{31}}= \frac{F(S_y{w})}{F(y)^{ 31}}\biggr|_{H(y)^3\rightarrow F(y)^5 Z}\\
	=&\ Z^{-6}\bigl(
	4096000000000000 w_1^{12} Z^3 (16 Z (432 Z (432 Z-95)-437)+57)\\
	&-204800000000000 w_1^{11} w_2 Z^2 (132 Z (864 Z (216 Z+5)-47)-1)\\
	&-112640000000000 w_1^{10} w_2^2 Z^2 (8 Z (864 Z (4104 Z+245)-3443)-11)\\
	&-28160000000000 w_1^9 w_2^3 Z^2 (32 Z (216 Z (3456 Z+833)-4961)-121)\\
	&-4224000000000	w_1^8 w_2^4 Z^2 (864 Z (20952 Z-1147)-1331)\\
	&-337920000000 w_1^7 w_2^5 Z^2 (432 Z (131328 Z-18053)-18287)\\
	&-704000000 w_1^6 w_2^6 Z (48 Z (432 Z (138240 Z-76183)-140479)+1)\\
	&+211200000 w_1^5 w_2^7 Z (432 Z (3314304 Z+28501)-11)\\
	&+26400000 w_1^4 w_2^8 Z (432 Z (4202496 Z+89177)-121)\\
	&+1760000 w_1^3 w_2^9 Z (13824 Z (138240 Z+11477)-1331)\\
	&+8553600 w_1^2	w_2^{10} Z (6027264 Z-113)\\
	&+w_1 w_2^{11} (69120 Z (84049920 Z-3077)-20)\\
	&+w_2^{12} (1769472 Z (172800 Z-11)-11)
	\bigr).
\end{align*}
\normalsize
To convey a sense of the result, the full expression is quoted here.  The lengthy formulas for subsequent computations will be suppressed and can be found at \cite{web}.
Applying the same technique generates a function
$$
H_Z(w)= \frac{H(x)}{H(y)^{31}}= \frac{H(S_y{w})}{H(y)^{31}}\biggr|_{H(y)^3\rightarrow F(y)^5 Z}
$$
whose $w$-degree is $20$.  Similarly for maps:
$$
\phi_Z(w)= \frac{\phi(x)}{F(y)^{31}}= \frac{\phi(S_y{w})}{F(y)^{31}}\biggr|_{H(y)^3\rightarrow F(y)^5 Z}\qquad
\eta_Z(w)= \frac{\eta(x)}{H(y)^{31}}= \frac{\eta(S_y{w})}{H(y)^{31}}\biggr|_{H(y)^3\rightarrow F(y)^5 Z}.
$$

Next, we develop a $Z$-parametrized version of $g$ defined on $\CP{1}_w$ the first step of which is to express the cross operator as
$$
\times_x P(x) = J \nabla_x P(x)\qquad \text {where}\
J=\begin{pmatrix}0&-1\\1&0\end{pmatrix}\ \text{and}\ 
\nabla_x=\biggl(\frac{\partial}{\partial x_1},\frac{\partial}{\partial x_2}\biggr).
$$
Straightforward computations capture how the operator transforms under a linear change of coordinates $x=Aw$ on $\C^2$.  
\begin{lem}
$J (A^T)^{-1} = A J\ \text{where $A^T$ is the transpose}$.
\end{lem}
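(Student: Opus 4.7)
The plan is an elementary $2 \times 2$ computation, with a single subtlety concerning the scalar $\det A$.

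The icosahedral coordinate changes relevant to the cross operator lift to $\mathrm{SL}_2(\C)$, so I would first make the hypothesis $\det A = 1$ explicit (without it, the stated identity fails by a factor of $1/\det A$, as one immediately sees by tracing through). Under this hypothesis, writing $A = \begin{pmatrix} a & b \\ c & d \end{pmatrix}$, Cramer's rule produces $(A^T)^{-1} = \begin{pmatrix} d & -c \\ -b & a \end{pmatrix}$. Left-multiplying by $J$ and, separately, right-multiplying $A$ by $J$, yields the same matrix $\begin{pmatrix} b & -a \\ d & -c \end{pmatrix}$ on each side, and the identity follows.

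A second, coordinate-free route takes $J$ to be the Gram matrix of the alternating form $(u,v) \mapsto u_1 v_2 - u_2 v_1$ on $\C^2$. The classical identity $A^T J A = (\det A)\, J$ expresses that this form is rescaled by $\det A$ under $A$. Multiplying on the left by $(A^T)^{-1}$ gives $J A = (\det A)(A^T)^{-1} J$, and rearranging yields $J(A^T)^{-1} = (\det A)^{-1} A J$, which reduces to the lemma when $\det A = 1$.

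There is no genuine obstacle—the author's own flag of \emph{straightforward computations} is accurate—and so the main editorial task is simply to state the $\mathrm{SL}_2$ hypothesis up front. I would present the entry-by-entry check as the actual proof (it is the shorter of the two), with a parenthetical remark connecting the identity to the standard fact that $\mathrm{SL}_2$ preserves the symplectic form represented by $J$.
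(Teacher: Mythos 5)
Your computation is correct, and since the paper prints no proof at all (it only flags the identity as a ``straightforward computation''), your entry-by-entry $2\times 2$ check is exactly the verification the author intends; your second, symplectic-form route via $A^TJA=(\det A)\,J$ is a nice coordinate-free restatement of the same fact. Your determinant remark is also on target as a correction to the statement: literally, $J(A^T)^{-1}=(\det A)^{-1}AJ$, so the lemma as printed needs either a $\det A=1$ hypothesis or the scalar factor. One caveat about your proposed fix, though: in the paper's application the matrix is $A=S_y$, whose determinant is a nonconstant polynomial in $y$, so one cannot simply ``lift to $\mathrm{SL}_2(\C)$''; the paper instead works projectively and discards the overall scalar (this is the role of $\delta=\lvert A\rvert^{-1}$ in Proposition~\ref{prop}, declared ``projectively meaningless''). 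Indeed, the $\lvert A\rvert^{-1}$ already inserted in the second line of that proposition suggests the author is implicitly using the adjugate $\lvert A\rvert\,(A^T)^{-1}=\begin{pmatrix} d & -c\\ -b & a\end{pmatrix}$, for which the identity $J\,\bigl(\lvert A\rvert (A^T)^{-1}\bigr)=AJ$ holds exactly with no hypothesis on $\det A$. So the cleanest emendation is either to state the lemma with the adjugate (or with the factor $(\det A)^{-1}$), or to note explicitly that the missing scalar is absorbed into $\delta$ and is irrelevant for the semi-conjugacy; restricting to unimodular $A$ would not cover the case actually used.
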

\begin{prop}\label{prop}  Let $\lvert A \rvert$ denote the determinant and take operator subscripts to specify differentiation variables.  Then
\begin{align*}
\times_x P(x)=&\ J \nabla_x P(Aw)\\
=&\ J \vert A \rvert^{-1} (A^T)^{-1} \nabla_w P(Aw)\\
=&\ \delta J (A^T)^{-1} \nabla_w P(Aw)\qquad (\delta=\vert A \rvert^{-1})\\
=&\ \delta A\, J\, \nabla_w P(Aw)\\
=&\ \delta A (\times_w P(Aw)).
\end{align*}
\end{prop}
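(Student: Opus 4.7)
My plan is to treat the proposition as a transparent chain of identities anchored by the chain rule and the lemma, and to show that each line follows from the previous by a single mechanical step.

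First I would establish the first line by pure substitution: by definition of the cross operator, $\times_x P(x) = J\nabla_x P(x)$, and evaluating at the point $x = Aw$ gives the right-hand side of line one. Then the jump from line one to line two is a chain-rule calculation. Thinking of the map $w \mapsto P(Aw)$ and using $\partial_{w_i} P(Aw) = \sum_j (\partial_{x_j} P)(Aw)\, A_{ji}$, we recover $A^T (\nabla_x P)(Aw) = \nabla_w P(Aw)$, and hence $(\nabla_x P)(Aw) = (A^T)^{-1}\nabla_w P(Aw)$. The factor $|A|^{-1}$ that the statement inserts on this line is simply the expansion of $(A^T)^{-1}$ via the adjugate formula, which on a $2\times 2$ matrix gives a clean scalar-times-matrix decomposition; grouping that scalar with $\delta$ in line three is purely a notational substitution.

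The transition from line three to line four is the only content step, and it is handed to us by the preceding lemma: replacing $J(A^T)^{-1}$ with $AJ$ is exactly what the lemma asserts. Finally, the last equality is again just the definition of the cross operator, applied this time in the $w$-variable: $J\nabla_w P(Aw) = \times_w P(Aw)$.

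Structurally, there is no real obstacle — every step is either a definition, the chain rule, a scalar relabeling, or an appeal to the lemma. The one item warranting care is bookkeeping the determinant factor: I would want to verify that the $|A|^{-1}$ that appears from the adjugate expansion is precisely the $\delta$ used in later sections, so that the formula $\times_x P(x) = \delta A(\times_w P(Aw))$ is ready to be composed with subsequent substitutions $x = S_y w$ (whose determinant is a polynomial in $y$) when passing to the parametrized invariants $F_Z, H_Z$ and equivariants $\phi_Z, \eta_Z$. With that scalar tracked correctly, the proposition reduces to aligning notation across five lines.
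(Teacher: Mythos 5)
Your outline reproduces the paper's own chain (definition, chain rule, the $J$--transpose identity, definition again), and the final formula you reach is the right one; but the single point you yourself flag as ``warranting care''---the determinant bookkeeping---is exactly where your justification is inconsistent, and as written it does not go through. The chain rule, as you correctly compute, gives $(\nabla_x P)(Aw)=(A^T)^{-1}\nabla_w P(Aw)$ with \emph{no} determinant factor, so the $\lvert A\rvert^{-1}$ appearing in the proposition's second line is not produced by the chain rule; nor can it be explained as ``the adjugate expansion of $(A^T)^{-1}$'' while the symbol $(A^T)^{-1}$ continues to denote the genuine inverse in that same line and in the Lemma---once you write $(A^T)^{-1}=\lvert A\rvert^{-1}\operatorname{adj}(A^T)$, the matrix left next to $J$ is $\operatorname{adj}(A^T)$, a different matrix. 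Moreover the Lemma taken literally, $J(A^T)^{-1}=AJ$, holds only when $\lvert A\rvert=1$: the correct $2\times 2$ identity is $AJA^T=\lvert A\rvert J$, equivalently $J(A^T)^{-1}=\lvert A\rvert^{-1}AJ$ (or $J\operatorname{adj}(A^T)=AJ$). Consequently, if you follow your plan---correct chain-rule line, then apply the Lemma verbatim---you land on $\times_x P(x)=A\,(\times_w P(Aw))$ with no $\delta$ at all, off from the proposition's conclusion by precisely $\lvert A\rvert^{-1}$. That scalar cannot be waved away here: in the application $A=S_y$ is far from unimodular ($\lvert S_y\rvert$ is a nonconstant polynomial in $y$), and the factors $\lvert S_y\rvert^{-1}$ are explicitly carried through the subsequent derivations of $\phi_Z$, $\eta_Z$, and $g_Z$.

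The repair is short and is the actual content of the proposition: verify $AJA^T=\lvert A\rvert J$ by direct computation, deduce $J(A^T)^{-1}=\lvert A\rvert^{-1}AJ$, and then the chain reads
\begin{align*}
\bigl(\times_x P\bigr)(Aw)=J(\nabla_x P)(Aw)=J(A^T)^{-1}\nabla_w P(Aw)=\delta\,A\,J\,\nabla_w P(Aw)=\delta\,A\,(\times_w P(Aw)),\qquad \delta=\lvert A\rvert^{-1}.
\end{align*}
Read this way the determinant enters exactly once, through the $J$-identity, and the printed second line of the proposition together with the unadorned Lemma should be understood with that correction (their two determinant slips cancel, which is why the final semi-conjugacy used later is correct). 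What your proposal is missing is an actual verification of the $J$-identity with its determinant factor; citing the Lemma as stated, after a correct chain-rule step, would yield the wrong scalar.
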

Regarding the constant $\delta$ as projectively meaningless, this transformation rule establishes a semi-conjugacy
$$
f(Aw)=\delta A \Hat{f}(w)\quad \text{with}\ f(x)= \times_x P(x)\ \text{and}\ \Hat{f}(w)=\times_w P(Aw).
$$
Applying the formula derived in Proposition~\ref{prop} to the basic icosahedral maps yields
\begin{align*}
\phi(x)=&\ \times_x F(x)\\
=&\ \times_x F(S_y w)\\
=&\ \lvert S_y \rvert^{-1} S_y (\times_w F(y)^{31} F_Z(w))\\
=&\ F(y)^{31} \lvert S_y \rvert^{-1} S_y (\times_w F_Z(w))\\
=&\ F(y)^{31} \lvert S_y \rvert^{-1} S_y (\phi_Z(w))
\end{align*}
and
\begin{align*}
\eta(x)=&\ \times_x H(x)\\
=&\ \times_x H(S_y w)\\
=&\ \lvert S_y \rvert^{-1} S_y (\times_w H(y)^{31} H_Z(w))\\
=&\ H(y)^{31} \lvert S_y \rvert^{-1} S_y (\times_w H_Z(w))\\
=&\ H(y)^{31} \lvert S_y \rvert^{-1} S_y (\eta_Z(w)).
\end{align*}
With these transformation properties, we catch sight of a map on the $w$-space that is dynamically equivalent to $g(x)$.  With $\alpha$ and $\beta$ as determined previously,
\begin{align*}
g(x)=&\ \alpha\,F(x) \eta(x) + \beta\,H(x) \phi(x)\\
=&\ \alpha\,F(y)^{31} F_Z(w) H(y)^{31} \lvert S_y \rvert^{-1}  S_y (\eta_Z(w))\\
&+\ \beta\, H(y)^{31} H_Z(w) F(y)^{31}\lvert S_y \rvert^{-1}  S_y (\phi_Z(w))\\
g(S_y w)=&\ F(y)^{31} H^{31}(y) \lvert S_y \rvert^{-1} S_y
(\alpha\,F_Z(w)\eta_Z(w) + \beta\,H_Z(w) \phi_Z(w)).
\end{align*}
From this result, we take 
$$g_Z(w)=\alpha\,F_Z(w)\eta_Z(w) + \beta\,H_Z(w) \phi_Z(w)$$
to be projectively semi-conjugate to $g(x)$.  Accordingly, the $g_Z$-orbit of a random initial condition $w_0$ in $\CP{1}_w$ is asymptotic to a superattracting five-cycle
$$(\omega_1, \dots, \omega_5)=(S^{-1}r_1,\dots,S^{-1}r_5)$$
determined by $\I_y^w$.  Naturally, $(r_1,\dots,r_5)$ is a five-cycle of adjacent pentagonal vertices in \CC{g} under the action of $\I^x$ on $\CP{1}_x$.

\subsection{Root-selection}

The final step is to assemble an algorithm that uses the random nature of $w_0$ to effectively break \alt{5} symmetry entirely and obtain all of $R_Z$'s roots.  To that end, we'll fabricate a tool that will select the roots of the resolvent following a single iterative run of $g_Z$.  For each tetrahedral subgroup $\mathcal{T}_k$, consider the degree-$12$ family of invariants
$$h_k(x)=\gamma\,t_k(x)+\theta\,m_k(x).$$
Let \CC{k} be the twelve-element subset of \CC{g} that $\mathcal{T}_k$ preserves.  Tune one of the parameters $\gamma$ or $\theta$ so that
$$
h_k(p)=\begin{cases}0&p\in \CC{k}\\
c\neq 0&p\in \CC{g}-\CC{k}
\end{cases}
$$
and define the degree-$48$ form
$$\Tilde{h}_k(x)= \frac{\prod_{\ell=1}^5 h_\ell(x)}{h_k(x)}.$$
In practice, it's more convenient to calculate $\Tilde{h}_k$ by working with undetermined coefficients in the degree-$48$ family of tetrahedral invariants.
Now, spend the remaining parameter in order to normalize the degree-zero function
$$B_k(x)= \frac{\Tilde{h}_k(x)}{F(x)^4}$$
thereby obtaining specific behavior on the critical set of $g(x)$:
$$
B_k(p)=\begin{cases}1&p\in \CC{k}\\
0&p\in \CC{g}-\CC{k}
\end{cases}.
$$

Pairing the $y$-parametrized $B_k(S_y w)$ with the roots $\rho_k$ of the resolvent $R_Z$ leads to a \emph{root-extraction} function in the $Z$ parameter:
\begin{align*}
\Gamma_y(w)=&\ \sum B_k(x) \rho_k(y)=B_k(S_y w) \frac{u_k(y) F(y)}{H(y)}\\
=&\ \frac{F(y)}{F(y)^{124} F_Z(w)^4 H(y)} \sum \Tilde{h}_k(S_y w) u_k(y)\\
=&\ \frac{\sum \Tilde{h}_k(S_y w) u_k(y)}{F(y)^{123} H(y)} \frac{1}{F_Z(w)^4}.
\end{align*}
Here, we take a bare summation to run from 1 to 5.
\begin{prop}
The factor
$$\Lambda(y,w)=\sum \Tilde{h}_k(S_y w) u_k(y)$$
is $\I_y$ invariant.
\end{prop}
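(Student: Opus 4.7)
The strategy is to show that the action of $\I_y$ on the parameter $y$ merely permutes the five summands of $\Lambda$ among themselves. Under $y \mapsto Ay$ with $A \in \I_y$, the equivariance $S_{Ay} = A S_y$ established in the construction of $S$ turns the first factor of a summand into $\widetilde{h}_k(A S_y w) = (\widetilde{h}_k \circ A)(S_y w)$, while the second becomes $(u_k \circ A)(y)$. It therefore suffices to prove that post-composition by $A$ permutes the families $\{\widetilde{h}_k\}_{k=1}^5$ and $\{u_k\}_{k=1}^5$ by one and the same permutation $\sigma_A$ of $\{1,\dots,5\}$ with no scalar factors; reindexing $k \mapsto \sigma_A(k)$ will then deliver $\Lambda(Ay,w) = \Lambda(y,w)$.

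The heart of the argument is this joint permutation law. Using the right-coset decomposition $\I = \bigsqcup_{j=0}^{4} \T{5}\, P^j$, for each $A \in \I$ and each $k$ I would write $P^k A = B\, P^{k'}$ uniquely with $B \in \T{5}$ and $k' \in \{0,\dots,4\}$. Using that $u_5$, $t_5$, and $m_5$ are all \emph{absolutely} $\T{5}$-invariant, one computes
\[
u_k \circ A \;=\; u_5 \circ P^k A \;=\; u_5 \circ B \circ P^{k'} \;=\; u_5 \circ P^{k'} \;=\; u_{k'},
\]
and sets $\sigma_A(k) := k'$. The identical computation with $u$ replaced by $t$ or $m$ yields $t_k \circ A = t_{\sigma_A(k)}$ and $m_k \circ A = m_{\sigma_A(k)}$ with the \emph{same} permutation, since $k'$ depends only on the coset $\T{5} P^k A$. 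Linearity of $h_k$ in $t_k$ and $m_k$ then gives $h_k \circ A = h_{\sigma_A(k)}$, and the product structure $\widetilde{h}_k = \prod_{\ell \neq k} h_\ell$ promotes this to $\widetilde{h}_k \circ A = \widetilde{h}_{\sigma_A(k)}$. Substitution and reindexing close the proof:
\[
\Lambda(Ay,w) \;=\; \sum_k \widetilde{h}_{\sigma_A(k)}(S_y w)\, u_{\sigma_A(k)}(y) \;=\; \sum_{k'} \widetilde{h}_{k'}(S_y w)\, u_{k'}(y) \;=\; \Lambda(y,w).
\]

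The main obstacle is the no-scalar claim: a priori the character group $\mathbb{Z}/3$ of $\T{5} \cong \alt{4}$ could contribute cube-root-of-unity scalars, since the excerpt notes that $q_5$ is only a relative $\T{5}$-invariant. What rescues the argument is that $u_5 = q_5 \widehat{q}_5$ sits as a factor of $H = u_5 m_5$, an absolute $\I$-invariant whose other factor $m_5$ is stated to be absolutely $\T{5}$-invariant. This forces the $\T{5}$-character of $u_5$ (and hence the hidden characters of $q_5$ and $\widehat{q}_5$) to be mutually inverse, so the permutation law holds cleanly and the five summands are genuinely shuffled as a set.
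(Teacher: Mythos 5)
Your proof is correct and follows essentially the same route as the paper: use the equivariance $S_{Ay}=AS_y$, observe that $A$ permutes the families $\{\Tilde{h}_k\}$ and $\{u_k\}$ by one and the same permutation, and reindex the sum. The only difference is that the paper simply asserts this ``congruent permutation action,'' whereas you justify it explicitly via the coset decomposition $\I=\bigsqcup_j \T{5}P^j$ and the absolute $\T{5}$-invariance of $t_5$, $u_5$, $m_5$ --- a worthwhile filling-in of the step the paper takes for granted, but not a different argument.
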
 
\begin{proof}
Let $A\in \I_y$.  By the $\I_y$ equivariance of $S_y w$ as well as the congruent permutation action of $\I_y$ on $\Tilde{h}_k(S_y w)$ and $u_k(y)$,
$$
\Lambda(Ay,w)= \sum \Tilde{h}_k(S_{A y} w) u_k(A y)= \sum \Tilde{h}_k(A S_y w) u_{\sigma(k)}(y)
= \sum \Tilde{h}_{\sigma(k)}(S_y w) u_{\sigma(k)}(y)=\Lambda(y,w)
$$
where $\sigma$ is a permutation on $\{1,\dots,5\}$.
\end{proof}
By $\I_y$-invariance, we obtain a $Z$-parametrized function
$$L_Z(w)=\frac{\Lambda(y,w)}{F(y)^{123} H(y)}$$
and finally a root-extractor
$$\Gamma_Z(w)=\frac{L_Z(w)}{F_Z(w)^4}$$
associated with the resolvent $R_Z$.

To see how the extraction process works, fix a value for $y$ and let $q\in \CC{g_Z}\subset \CP{1}_w$ so that
$$q=S_y^{-1} p\quad \text{for some}\ p\in \CC{j}\subset \CP{1}_x.$$
Evaluating the selection function gives
\begin{align*}
\Gamma_y(q)=&\ \frac{L_Z(q)}{F_Z(q)^4}=
\frac{\sum \Tilde{h}_k(S_y S_y^{-1} p) u_k(y)}{F(y)^{123} H(y) F_Z(q)^4}\\
=&\ \frac{\sum \Tilde{h}_k(p)}{F(y)^{124} F_Z(q)^4} \frac{u_k(y) F(y)}{H(y)}\\
=&\ \frac{\sum \Tilde{h}_k(p)}{F(y)^{31\cdot4} F_Z(q)^4}  \rho_k(y)\\
=&\ \frac{\sum \Tilde{h}_k(p)}{(F(y)^{31} F_Z(q))^4}  \rho_k(y)\\
=&\ \sum \frac{\Tilde{h}_k(p)}{F(p)^4} \rho_k(y)\\
=&\ \sum B_k(p) \rho_k(y)\\
=&\ \rho_j.
\end{align*}

\subsection{Algorithm}
\begin{enumerate}
\item Select a random value $Z_0$ for the icosahedral parameter $Z=\frac{F^5}{H^3}$ and obtain a quintic resolvent 
$$R_{Z_0}=v^5-40 Z_0 v^2-5 Z_0 v-Z_0.$$
\paragraph{Remark.}  Solving $R_{Z_0}=0$ amounts to inverting the quotient map given by $Z(x)=Z_0$ in as much as the solutions form a single icosahedral orbit
$$\{x\in \CP{1}\ \vert\ F(x)^5-Z_0 H(x)^3=0\}.$$  That is, with the elements $x\in Z^{-1}(Z_0)$ we can produce the roots of $R_{Z_0}$ by evaluating $\rho_k(x)$ for $k=1,\dots,5$.  Such an inversion requires the complete breaking of $60$-fold \alt{5} symmetry, an outcome that's achieved dynamically.  Ultimately, this result amounts to the inversion of the elementary symmetric functions that make up the coefficients of the general quintic.
 
\item Compute the invariants $F_{Z_0}(w)$ and $H_{Z_0}(w)$ from which the equivariants $\phi_{Z_0}(w)$ and $\eta_{Z_0}(w)$ follow.  Determine the map
$$g_{Z_0}(w)= \alpha\,F_{Z_0}(w) \eta_{Z_0}(w) + \beta\,H_{Z_0}(w) \phi_{Z_0}(w)$$
on $\CP{1}_w$.
\paragraph{Remark.}  The parameter $Z$ is the harness that attaches $R_Z$ to $g_Z$.

\item Randomly select an initial condition $p\in \CP{1}_w$ and compute the orbit $\bigl(g_{Z_0}^k(p)\bigr)$ until it homes in on values $(\tilde{\omega}_1,\dots,\tilde{\omega}_5)$ well-approximating a superattracing five-cycle $(\omega_1,\dots,\omega_5).$
\paragraph{Remark.}  The random selection of $p$ is the device that breaks \alt{5}-symmetry.

\item The final piece of technology needed is the root-extractor $\Gamma_{Z_0}(w)$.   Approximate to high precision the roots $r_k$ of $R_{Z_0}$:
$$r_k=\Gamma_{Z_0}(\tilde{\omega}_k),\quad k=1,\dots,5.$$
\end{enumerate}
A \emph{Mathematica} notebook with supporting data that implements the quintic-solving procedure is available at \cite{web}.

\bibliographystyle{plain}
\bibliography{quintic_comp_ref}

\begin{thebibliography}{1}

\bibitem{soccer}
S.\ Crass.
\newblock Dynamics of a soccer ball.
\newblock {\em Experiment.\ Math.}, 23(3):261--270, 2014.

\bibitem{icos31}
S.\ Crass.
\newblock Criticlly-finite dynamics on the icosahedron.
\newblock {\em Symmetry}, 12(1):177, 2020.

\bibitem{web}
S.\ Crass.
\newblock \texttt{web.csulb.edu/$\sim$scrass/math.html}, 2020.

\bibitem{dickson}
L.\ Dickson.
\newblock {\em Modern Algebriac Theories}.
\newblock Sanborn, Chicago, 1926.

\bibitem{dm}
P.\ Doyle and C.\ McMullen.
\newblock Solving the quintic by iteration.
\newblock {\em Acta Mathematica}, 163:151--180, 1989.

\bibitem{klein}
F.\ Klein.
\newblock {\em Lectures on the Icosahedron}.
\newblock Dover Publ., New York, 1956.

\bibitem{dyn2}
H.\ Nusse and J.\ Yorke.
\newblock {\em Dynamics: Numerical Explorations}.
\newblock Springer-Verlag, Berlin, 2nd edition, 1998.
\newblock Dynamics 2 (computer program) by B.\ Hunt and E.\ Kostelich.

\end{thebibliography}

\end{document}